\theoremstyle{plain}
\newtheorem{Theorem}{Theorem}[section]
\newtheorem{Lemma}[Theorem]{Lemma}
\newtheorem{Corollary}[Theorem]{Corollary}
\theoremstyle{definition}
\newtheorem{Example}[Theorem]{Example}
\newtheorem{Remark}[Theorem]{Remark}
\DeclareMathOperator*{\loc}{loc}
\DeclareMathOperator*{\esssup}{ess\,sup}
\title[Gagliardo-Nirenberg Inequality for r.i.BFS]{Gagliardo-Nirenberg Inequality for rearrangement-invariant Banach function spaces}
\author{A. Fiorenza, M.R. Formica, T. Roskovec and F. Soudsk\' y}
\address{Universit\'a di Napoli Federico II, Dipartimento di Architettura, via Monteoliveto, 3, 80134 - Napoli (Italy) and Consiglio Nazionale delle Ricerche, Istituto per le Applicazioni del Calcolo ``Mauro Picone", Sezione di Napoli, via Pietro Castellino, 111, 80131 - Napoli (Italy)}
\email{fiorenza@unina.it}
\address{Universit\'a di Napoli Parthenope, via Generale Parisi, 13, 80132 - Napoli (Italy)}
\email{formica@uniparthenope.it}
\address{Faculty of Economics, University of South Bohemia, Studentsk\' a 13, \v Cesk\' e Bud\v ejovice, Czech Republic and Faculty of Information Technology, Czech Technical University in Prague, Th\'{a}kurova 9, 160 00 Prague 6, Czech Republic}
\email{troskovec@ef.jcu.cz}
\address{Faculty of Economics, University of South Bohemia, Studentsk\' a 13, \v Cesk\' e Bud\v ejovice, Czech Republic}
\email{fsoudsky@ef.jcu.cz}
\subjclass[2010]{46E35, 35A23, 26D10}
\keywords{Gagliardo-Nirenberg inequality, interpolation inequality, intermediate derivatives, abstract Sobolev spaces, Sobolev embedding theorem, inequalities for derivatives, Banach functions spaces, fundamental function, Lorentz spaces, Orlicz spaces, Holder inequality}
\date{December 7, 2018}
\begin{document}

\begin{abstract}
The classical Gagliardo--Nirenberg interpolation inequality is a well-known estimate which gives, in particular, an estimate for the Lebesgue norm of intermediate derivatives of functions in Sobolev spaces. We present an extension of this estimate into the scale of the general rearrangement--invariant Banach function spaces with the proof based on the Maz'ya's pointwise estimates. As corollaries, we present the Gagliardo--Nirenberg inequality for intermediate derivatives in the case of triples of Orlicz spaces and triples of Lorentz spaces. Finally, we promote the scaling argument to validate the optimality of the Gagliardo--Nirenberg inequality and show that the presented estimate in Orlicz scale is optimal.
\end{abstract}

\maketitle

\section{Introduction and Main Results}
The Sobolev--Gagliardo--Nirenberg inequality is one of the classical result obtained in many forms. We study the original form
$$\|\nabla^j u\|_{X}\lesssim \|\nabla^k u\|^{j/k}_{Y}\|u\|^{1-j/k}_{Z}.$$
Original results by Gagliardo \cite{Ga} and Nirenberg \cite{Ni} are focused on $X, Y, Z$ being the Lebesgue spaces. It is natural to extend these results to finer scales, considering, for instance, the Orlicz spaces (see \cite{KPP1, KPP2, KPP3, KPP4} by Ka{\l}amajska and Pietruska--Pa{\l}uba), the Lorentz spaces (see \cite{Mami} by Mart\'in and Milman, \cite{Kol2} by Kolyada and P\'erez L\'azaro, \cite{nguyendiaz} by Dao, D\'iaz and Nguyen, even if such papers deal with a slightly different setting) or the fractional Sobolev cases (see \cite{BM} by Brezis and Mironescu). There are also results where BMO or BV spaces are used \cite{MRR, KaMi, St, Led, nguyendiaz}.

There are more ways of proving such type of inequality. We should mention point-wise results by Bojarski and Haj{\l}asz \cite{BoHa} and Maz'ya and Shaposhnikova \cite{MS1, MS2, MS3} based on the maximal operator theory. Also, the theory of heat semigroups is applied by Mart\'in and Milman \cite{Mami}, Ledoux \cite{Led} or Kavian \cite{Kav}. Recently the theory of Besov and Lizorkin--Triebel spaces and wavelets are used by Brezis and Mironescu, Cohen et al., Ledoux, and Kolyada \cite{BM, CDDD, Led, Kol1}. We study and compare the original papers by Gagliardo and Nirenberg in \cite{FFRS}. In this paper we prove a generalisation using the  H\"older inequality in Banach function spaces over $\mathbb{R}^n$ endowed with the Lebesgue measure, therefore obtaining some of the previous results and some new results in an easy way. The introduced theorems are based only on basic Banach functional spaces properties, the pointwise estimate by Maz'ya and the optimal choice of spaces for H\"older inequality. Our calculations also contain the H\"older factorisation or the theory of multipliers, but we do not recall any advanced results of this theory.

Very similar methods to ours appear in \cite{van}, some Gagliardo--Nirenberg type inequalities in the framework of classical Sobolev spaces involving the localised Morrey norm are proven. The proof includes scaling arguments, the pointwise estimate by Maz'ya, and the maximal operator usage. 

Denoting $\varphi_X(t)$ the fundamental function of a given rearrangement-invariant Banach function space $X$, we may formulate a necessary condition for the Gagliardo--Nirenberg inequality to hold, which we get using a scaling argument. Scaling arguments are not new in the framework of the theory of Sobolev spaces (see, e.g. the standard arguments in Lectures 23, 30, 32 in the Tartar's book \cite{tartar}). However, the following result has the novelty to involve a general class of spaces and fundamental functions.

\begin{Theorem}[Scaling argument]\label{SA} Let $X,Y,Z$ be rearrangement invariant Banach function spaces over $\mathbb{R}^n$ endowed with the Lebesgue measure. Let $j,k\in\mathbb{N}$, $1\leq j<k$, and assume that 
\begin{equation}\label{GN}
\|\nabla^j u\|_{X}\lesssim \|\nabla^k u\|^{\frac{j}{k}}_{Y}\|u\|^{1-\frac{j}{k}}_{Z}
\end{equation}
holds for all $k$-times weakly differentiable functions $u$, with a constant independent of $u$.

Then the inequality
\begin{equation}\label{GNNC}
\varphi_{X}(t)\lesssim \varphi_{Y}^{j/k}(t)\varphi_{Z}^{1-j/k}(t)
\end{equation}
holds for all $t>0$ with a constant independent of $t$.
\end{Theorem}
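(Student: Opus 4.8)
The plan is to probe the hypothesis \eqref{GN} with a one-parameter family of dilates of a single, carefully chosen, compactly supported test function, and to extract \eqref{GNNC} from the way the three norms scale. Throughout I use three standard facts about a rearrangement-invariant space: the fundamental function is $\varphi_X(t)=\|\chi_E\|_X$ for any set $E$ with $|E|=t$; the norm is monotone, so $0\le f\le g$ forces $\|f\|_X\le\|g\|_X$, and more generally if $|f|\ge a$ on a set of measure $m$ then $f^*\ge a\,\chi_{[0,m)}$; and $\varphi_X$ is quasiconcave (nondecreasing, with $t\mapsto\varphi_X(t)/t$ nonincreasing), whence $\varphi_X(ct)$ and $\varphi_X(t)$ are comparable for each fixed $c>0$. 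The one nontrivial preliminary is to fix $\rho\in C_c^\infty(\mathbb{R}^n)$ with $0\le\rho\le1$ and, crucially, with a pointwise \emph{lower} bound $|\nabla^j\rho|\ge c_0>0$ on a set of measure at least $\sigma>0$. Such a $\rho$ exists: taking $\rho(x)=\psi(x_1)\eta(x_2)\cdots\eta(x_n)$, with $\eta$ a standard bump equal to $1$ near the origin and $\psi$ smooth and compactly supported coinciding with $x_1^j/j!$ on an interval, one has $\partial_1^j\rho\equiv1$ on a product set of positive measure, so $|\nabla^j\rho|\ge c_0$ there. Write $\beta:=|\operatorname{supp}\rho|$ and $M_k:=\|\nabla^k\rho\|_\infty$.

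Next, for $R>0$ I would set $u_R(x):=\rho(x/R)$, which is again smooth with compact support and hence admissible in \eqref{GN}, and record that $\nabla^i u_R=R^{-i}(\nabla^i\rho)(\cdot/R)$. On the $X$-side I need a lower bound: since $|\nabla^j u_R|\ge c_0R^{-j}$ on the dilated set $R\cdot\{|\nabla^j\rho|\ge c_0\}$, of measure $R^n\sigma$, its decreasing rearrangement dominates $c_0R^{-j}\chi_{[0,R^n\sigma)}$, so $\|\nabla^j u_R\|_X\ge c_0R^{-j}\varphi_X(R^n\sigma)$. On the $Y$- and $Z$-sides I need upper bounds: from $0\le u_R\le\chi_{\operatorname{supp}u_R}$ and $|\nabla^k u_R|\le M_kR^{-k}\chi_{\operatorname{supp}u_R}$, together with $|\operatorname{supp}u_R|=R^n\beta$ and monotonicity of the norm, I obtain $\|u_R\|_Z\le\varphi_Z(R^n\beta)$ and $\|\nabla^k u_R\|_Y\le M_kR^{-k}\varphi_Y(R^n\beta)$.

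Plugging these three estimates into \eqref{GN} is the decisive step, and here the homogeneity does the work for free: the powers of $R$ on the two sides are $R^{-j}$ and $(R^{-k})^{j/k}(R^{0})^{1-j/k}=R^{-j}$, so they cancel exactly and leave
\[
\varphi_X(R^n\sigma)\lesssim\varphi_Y(R^n\beta)^{j/k}\,\varphi_Z(R^n\beta)^{1-j/k},
\]
with a constant depending only on $\rho$ and $n$. Writing $t=R^n$, which ranges over all of $(0,\infty)$, and using quasiconcavity to absorb the fixed factors $\sigma,\beta$ (each $\varphi(ct)$ being comparable to $\varphi(t)$) yields \eqref{GNNC} for every $t>0$ with a constant independent of $t$.

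I expect the only genuine difficulty to be the construction of the test profile $\rho$: one must simultaneously guarantee a pointwise lower bound on $|\nabla^j\rho|$ over a set of positive measure, so as to capture $\varphi_X$ from below, while keeping $\rho$ and all its derivatives bounded with compact support, so as to capture $\varphi_Y$ and $\varphi_Z$ from above. Everything else is bookkeeping: the exact cancellation of the $R$-powers is forced by the exponents $j/k$ and $1-j/k$ in \eqref{GN}, and the passage from $\varphi(ct)$ to $\varphi(t)$ is immediate from quasiconcavity, so no limiting or asymptotic argument is needed.
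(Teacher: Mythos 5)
Your proof is correct and follows essentially the same route as the paper: both arguments test \eqref{GN} on dilates of a single fixed compactly supported profile, use the homogeneity $\nabla^i u_R = R^{-i}(\nabla^i u)(\cdot/R)$ so that the powers of the dilation parameter cancel exactly, and then read off the fundamental functions. The only difference is in implementation: the paper chooses an explicit piecewise-polynomial profile and derives two-sided equivalences $\|\nabla^i T_s u\|\approx s^{i}\varphi(\,\cdot\,)$ via the Hardy--Littlewood--Polya principle, whereas you take a generic smooth bump and use one-sided bounds (a lower bound for $\|\nabla^j u_R\|_X$ through the fundamental function, upper bounds for the other two norms) together with quasiconcavity of $\varphi$ to absorb the mismatched measures $\sigma$ and $\beta$ --- which is all that is needed.
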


In the following we shall use the symbol $$Y\stackrel{\textup{loc}}{\hookrightarrow}X,
$$ to denote the local embedding of the space $Y$ into space $X$ (for the precise definition see \eqref{locemb}); for the precise definition of the maximal rearrangement $u^{**}$, of $Y^{\frac{k}{j}}$ and of $Y^X$ see \eqref{maxf}, \eqref{power} and \eqref{FS} respectively. 
\begin{Theorem}[Gagliardo--Nirenberg inequality for r.i.BFS]\label{GBFSR}
If $j,k\in\mathbb{N}$, $1\leq j<k$, and if $X,Y$ are rearrangement invariant Banach function spaces over $\mathbb{R}^n$ such that 
$$
Y^{\frac{k}{j}}\stackrel{\textup{loc}}{\hookrightarrow} X\, ,
$$
then the estimate 
$$
\|\nabla^j u\|_{X}\lesssim\|(\nabla^k u)^{**}\|_{\raise -4pt \hbox{${}_Y$}}^{\frac{j}{k}}
\|u^{**}\|_{((Y^{\frac{k}{j}})^{X})^{1-\frac{j}{k}}}^{1-\frac{j}{k}}
$$
holds for all $k$-times weakly differentiable functions $u$ with a constant independent of $u$.
\end{Theorem}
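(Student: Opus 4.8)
The plan is to collapse the whole inequality onto a single pointwise estimate for maximal rearrangements and then to undo the resulting product by a generalised Hölder inequality tailored to the factorisation space. The engine of the argument is Maz'ya's pointwise estimate, which for a $k$-times weakly differentiable $u$ produces a constant $C=C(n,j,k)$ such that
$$
(\nabla^j u)^{**}(t)\lesssim\bigl((\nabla^k u)^{**}(t)\bigr)^{\frac jk}\bigl(u^{**}(t)\bigr)^{1-\frac jk}\qquad(t>0).
$$
Since the norm of the rearrangement invariant space $X$ depends only on the decreasing rearrangement and is monotone on the lattice, and since $(\nabla^j u)^*\le(\nabla^j u)^{**}$, I would first write $\|\nabla^j u\|_X=\|(\nabla^j u)^*\|_X\le\|(\nabla^j u)^{**}\|_X$ and then feed in the pointwise estimate to obtain
$$
\|\nabla^j u\|_X\lesssim\bigl\|\bigl((\nabla^k u)^{**}\bigr)^{\frac jk}\bigl(u^{**}\bigr)^{1-\frac jk}\bigr\|_X.
$$
At this point the problem no longer involves derivatives: it has become the question of estimating the $X$-norm of a product of two fixed nonnegative decreasing functions on the half-line.

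The central step is to split this product using the Hölder inequality attached to the factorisation space $(Y^{k/j})^X$. With $a=\bigl((\nabla^k u)^{**}\bigr)^{j/k}$ and $b=\bigl(u^{**}\bigr)^{1-j/k}$, the defining property in \eqref{FS}, namely $\|ab\|_X\le\|a\|_{Y^{k/j}}\|b\|_{(Y^{k/j})^X}$, gives
$$
\bigl\|\bigl((\nabla^k u)^{**}\bigr)^{\frac jk}\bigl(u^{**}\bigr)^{1-\frac jk}\bigr\|_X\le\bigl\|\bigl((\nabla^k u)^{**}\bigr)^{\frac jk}\bigr\|_{Y^{k/j}}\,\bigl\|\bigl(u^{**}\bigr)^{1-\frac jk}\bigr\|_{(Y^{k/j})^X}.
$$
It then remains only to rewrite each factor through the power-space identities encoded in \eqref{power}, namely
$$
\bigl\|\bigl((\nabla^k u)^{**}\bigr)^{\frac jk}\bigr\|_{Y^{k/j}}=\|(\nabla^k u)^{**}\|_Y^{\frac jk},\qquad\bigl\|\bigl(u^{**}\bigr)^{1-\frac jk}\bigr\|_{(Y^{k/j})^X}=\|u^{**}\|_{((Y^{k/j})^X)^{1-\frac jk}}^{1-\frac jk},
$$
and to substitute these two identities into the previous display; this yields exactly the asserted estimate.

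The role of the hypothesis $Y^{k/j}\stackrel{\textup{loc}}{\hookrightarrow}X$, and the point I expect to demand the most care, is to make the factorisation non-degenerate: without a local comparison between $Y^{k/j}$ and $X$ the multiplier space $(Y^{k/j})^X$ could collapse to the trivial space, rendering the Hölder inequality vacuous. I would therefore check that the local embedding forces every bounded, compactly supported function to be an admissible multiplier (if $b\in L^\infty$ has compact support and $a\in Y^{k/j}$, then $ab$ is compactly supported with $|ab|\le\|b\|_\infty|a|$, so $ab\in X$ by the local embedding), which is what guarantees that $(Y^{k/j})^X$ is a genuine function norm and that the right-hand side is finite for the admissible $u$. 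A consistency check against the Lebesgue triple $X=L^p$, $Y=L^r$, $Z=L^s$---where $Y^{k/j}=L^{rk/j}$, the multiplier space is $(Y^{k/j})^X=M(L^{rk/j},L^p)$, and the exponent relation reduces to $\frac1p=\frac jk\frac1r+\bigl(1-\frac jk\bigr)\frac1s$---confirms that these conventions for the power space and the factorisation space are the correct ones and that the estimate recovers the classical Gagliardo--Nirenberg inequality.
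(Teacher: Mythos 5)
Your overall architecture --- Maz'ya's estimate, then the H\"older inequality for the factorisation space $(Y^{k/j})^X$, then the convexification identities --- is the paper's, but your opening step contains a genuine gap. Maz'ya's theorem is the pointwise bound $|\nabla^j u(x)|\lesssim M(\nabla^k u)(x)^{j/k}\,Mu(x)^{1-j/k}$ on $\mathbb{R}^n$, involving the maximal operator; it is \emph{not} the rearrangement inequality $(\nabla^j u)^{**}(t)\lesssim((\nabla^k u)^{**}(t))^{j/k}(u^{**}(t))^{1-j/k}$ that you declare to be your engine. That inequality does not follow by simply ``rearranging'' Maz'ya's bound: the product estimate for rearrangements gives $(fg)^*(s)\le f^*(s/2)\,g^*(s/2)$, and if you then average to produce the double star on the left and invoke Riesz--Herz, you land on triple-star quantities $(\nabla^k u)^{***}$ and $u^{***}$ on the right, which are not comparable to the double stars in general. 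So the inequality you start from is unproved, and the most natural attempt to prove it in the form you state fails. The paper sidesteps this by postponing the passage to rearrangements: it takes the $X$-norm of Maz'ya's estimate as it stands, applies the H\"older factorisation to $M(\nabla^k u)^{j/k}$ and $(Mu)^{1-j/k}$ as functions on $\mathbb{R}^n$ (which is also where the space $(Y^{k/j})^X$ of Lemma \ref{Hold} is defined --- you instead apply it to functions on the half-line, which needs an extra transplantation remark, harmless here because the functions are non-increasing), and only at the very end uses Riesz--Herz in the form $\|Mv\|_W=\|(Mv)^*\|_W\approx\|v^{**}\|_W$ to convert the two resulting norms.

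Your route is repairable without changing its spirit: you only need the single-star version $(\nabla^j u)^*(t)\lesssim((\nabla^k u)^{**}(t))^{j/k}(u^{**}(t))^{1-j/k}$, which does follow from Maz'ya's bound via $(fg)^*(t)\le f^*(t/2)g^*(t/2)$, Riesz--Herz, and the doubling property $v^{**}(t/2)\le 2v^{**}(t)$; since $\|\nabla^j u\|_X=\|(\nabla^j u)^*\|_X$ by the Luxemburg representation, this already yields your second display (your detour through $\|(\nabla^j u)^{**}\|_X$ is unnecessary as well as unjustified). From there the rest of your argument --- the H\"older inequality $\|ab\|_X\le\|a\|_{Y^{k/j}}\|b\|_{(Y^{k/j})^X}$ and the identities $\|a^{j/k}\|_{Y^{k/j}}=\|a\|_Y^{j/k}$, $\|b^{1-j/k}\|_{W}=\|b\|_{W^{1-j/k}}^{1-j/k}$ --- coincides with the paper's computation, and your reading of the hypothesis $Y^{k/j}\stackrel{\loc}{\hookrightarrow}X$ as exactly what makes $(Y^{k/j})^X$ a genuine Banach function norm is correct: that is Lemma \ref{Hold}.
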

\begin{Corollary}[Gagliardo--Nirenberg inequality for Lorentz spaces]\label{GNLOR}
Let $j,k\in\mathbb{N}$, $1\leq j<k$, $P,Q,R>1$, and $p,q,r\geq 1$ be numbers satisfying
\begin{equation}\label{assCor13}
\frac{1}{P}=\frac{\frac{j}{k}}{R}+\frac{1-\frac{j}{k}}{Q}, \quad \frac{1}{p}=\frac{\frac{j}{k}}{r}+\frac{1-\frac{j}{k}}{q}
\end{equation}
then the estimate
$$
\|\nabla^j u\|_{P,p}\lesssim \|\nabla^k u\|_{R,r}^{\frac{j}{k}}\|u\|_{Q,q}^{1-\frac{j}{k}}
$$
holds for all $k$-times weakly differentiable functions $u$ with a constant independent of $u$.
\end{Corollary}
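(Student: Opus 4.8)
The plan is to derive Corollary~\ref{GNLOR} from Theorem~\ref{GBFSR} by specializing the abstract rearrangement-invariant spaces to Lorentz spaces. First I would set $X=L^{P,p}$, $Y=L^{R,r}$, and $Z=L^{Q,q}$, and verify that these choices are genuine rearrangement-invariant Banach function spaces over $\mathbb{R}^n$; since $P,Q,R>1$ and $p,q,r\ge 1$, the Lorentz functionals are (equivalent to) norms, so this is routine. The central computational task is to identify the two derived spaces appearing on the right-hand side of Theorem~\ref{GBFSR}, namely the power space $Y^{k/j}$ and the factorization space $(Y^{k/j})^{X}$, as Lorentz spaces with the exponents dictated by the two balance conditions in \eqref{assCor13}.

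Next I would compute $Y^{k/j}=(L^{R,r})^{k/j}$ using the definition \eqref{power}: taking the $(k/j)$-th power of a Lorentz space should again be a Lorentz space, with first index scaled to $R\cdot\frac{j}{k}$ and second index scaled to $r\cdot\frac{j}{k}$, so that $Y^{k/j}=L^{R j/k,\, r j/k}$ up to equivalence of norms. I would then verify the local embedding hypothesis $Y^{k/j}\stackrel{\textup{loc}}{\hookrightarrow}X$: comparing $L^{Rj/k,\,rj/k}$ with $L^{P,p}$ locally reduces to comparing first indices (and, when they are equal, the second indices), and the relations in \eqref{assCor13} guarantee $\frac{1}{P}\ge \frac{j/k}{R}$, i.e. $P\le Rj/k$ fails to be the right direction unless one reads the embedding in the correct orientation — so I must be careful with the direction of Lorentz embeddings. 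The key algebraic point is that \eqref{assCor13} forces exactly the index matching that makes the hypothesis hold.

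The main obstacle will be the identification of the factorization space $(Y^{k/j})^{X}$ from \eqref{FS} and showing that the Lebesgue-type norm $\|u^{**}\|$ over this space, raised to the power $1-\frac{j}{k}$, collapses to the Lorentz norm $\|u\|_{Q,q}^{1-j/k}$. Here I expect to use the general principle that the factorization (Hölder) space $Y^X$ between two Lorentz spaces is again a Lorentz space whose indices are obtained by the conjugate-type relations $\frac{1}{\text{index}}=\frac{1}{\text{index of }X}-\frac{1}{\text{index of }Y}$; applied to $X=L^{P,p}$ and $Y^{k/j}=L^{Rj/k,\,rj/k}$, the first-index relation reads $\frac{1}{P}-\frac{j/k}{R}=\frac{1-j/k}{Q}$, which is precisely the content of \eqref{assCor13}, and similarly for the second indices with $p,q,r$. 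Finally I would replace the maximal functions $(\nabla^k u)^{**}$ and $u^{**}$ by $\nabla^k u$ and $u$ themselves: since $P,Q,R>1$, the Hardy--Littlewood maximal operator is bounded on the corresponding Lorentz spaces, so $\|f^{**}\|_{R,r}\lesssim\|f\|_{R,r}$ and likewise for the other factor, which lets me pass from the $**$-norms in Theorem~\ref{GBFSR} to the plain Lorentz norms claimed in the statement, completing the proof.
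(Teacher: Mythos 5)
Your proposal follows essentially the same route as the paper: specialize Theorem~\ref{GBFSR} to $X=L^{P,p}$, $Y=L^{R,r}$, identify the convexification and the H\"older-factorization space as Lorentz spaces, use the balance conditions \eqref{assCor13} to recognize the resulting space as (a convexification collapsing to) $L^{Q,q}$, and finally discard the maximal functions using $R,Q>1$. Your justification for the last step (boundedness of $M$ on $L^{R,r}$ for $R>1$ plus Riesz--Herz) is a legitimate variant of the paper's appeal to the equivalence $\|u^{**}\|_{R,r}\approx\|u\|_{R,r}$.

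There is, however, one concrete slip you should repair: the convexification goes the other way. By \eqref{power} and Example~\ref{ExampleFactorization}(iii), raising $L^{R,r}$ to the power $\alpha=k/j>1$ \emph{multiplies} both indices by $\alpha$, so $Y^{k/j}=(L^{R,r})^{k/j}=L^{Rk/j,\,rk/j}$, not $L^{Rj/k,\,rj/k}$ as you wrote. This is precisely what resolves the directional worry you raise about the local embedding: from \eqref{assCor13} one has $\frac{1}{P}=\frac{j/k}{R}+\frac{1-j/k}{Q}>\frac{j/k}{R}=\frac{1}{Rk/j}$, hence $P<\frac{k}{j}R$, and since on sets of finite measure the Lorentz space with the larger first index embeds into the one with the smaller first index, $L^{Rk/j,\,rk/j}\stackrel{\textup{loc}}{\hookrightarrow}L^{P,p}$ holds with no ambiguity. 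Note that your subsequent factorization identity $\frac{1}{P}-\frac{j/k}{R}=\frac{1-j/k}{Q}$ silently uses the \emph{correct} reciprocal $\frac{j/k}{R}=\frac{1}{Rk/j}$ for the first index of $Y^{k/j}$, so the rest of your computation is consistent with the corrected convexification; once you fix the exponent and carry the final $(1-\frac{j}{k})$-convexification through for the second indices as well (using the relation between $p,q,r$ in \eqref{assCor13}), the argument coincides with the paper's proof.
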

We can easily prove the optimality of the choice for parameters $P, Q, R$, but the optimality of $p, q, r$ is still open, as we show later among the proofs.
\begin{Corollary}[Optimal Gagliardo--Nirenberg inequality for Orlicz spaces]\label{GNORL}
Let $j,k\in\mathbb{N}$, $1\leq  j<k$, let $A,B,C$ be a triple of Young functions, such that for all $t>0$ it holds
\begin{equation}\label{CFO}
B^{-1}(t)^{j/k}C^{-1}(t)^{1-j/k}\lesssim A^{-1}(t).
\end{equation}
Then the estimate
$$
\|\nabla^j u\|_{L^A}\lesssim \|(\nabla^k u)^{**}\|_{L^B}^{\frac{j}{k}}\|u^{**}\|^{1-\frac{j}{k}}_{L^C}
$$
holds for all $k$-times weakly differentiable functions $u$ with a constant independent of $u$.
Moreover, 
if the upper Boyd indices of $L^B$ and $L^C$ are smaller than $1$, then the inequality
\begin{equation}\label{GNOS}
\|\nabla^j u\|_{L^A}\lesssim\|\nabla^k u\|_{L^B}^{j/k}\|u\|_{L^C}^{1-j/k}
\end{equation}
holds for all $k$-times weakly differentiable functions $u$.

On the other hand, if condition \eqref{CFO} does not hold, the inequality \eqref{GNOS} fails to hold.
\end{Corollary}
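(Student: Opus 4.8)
The plan is to obtain all three assertions from results already in hand: the first inequality (with the maximal functions, under \eqref{CFO}) as a direct specialisation of Theorem \ref{GBFSR}, the second (removal of $u^{**}$ and $(\nabla^k u)^{**}$) as a consequence of Boyd's theorem on the boundedness of the averaging operator, and the third (necessity of \eqref{CFO}) as the contrapositive of the scaling argument, Theorem \ref{SA}. The whole computation rests on the Orlicz fundamental function identity $\varphi_{L^A}(t)=1/A^{-1}(1/t)$ together with two explicit identifications of the auxiliary spaces through inverse Young functions.

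For the first part I would apply Theorem \ref{GBFSR} with $X=L^A$ and $Y=L^B$. With the convention on powers that makes $\varphi_{(L^B)^{k/j}}=\varphi_{L^B}^{j/k}$, the power space is again an Orlicz space $L^{\hat B}$ with $\hat B^{-1}=(B^{-1})^{j/k}$. By the Hölder factorisation for Orlicz spaces, the multiplier space is $\bigl((L^B)^{k/j}\bigr)^{L^A}=L^D$ with $D^{-1}=A^{-1}/(B^{-1})^{j/k}$, and its $(1-j/k)$-power is the Orlicz space $L^{\check D}$ with $\check D^{-1}=(D^{-1})^{1/(1-j/k)}$. The point is that the single condition \eqref{CFO} does all the work. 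On the one hand, since $C^{-1}$ is increasing and bounded below by a positive constant for large $t$, \eqref{CFO} forces $(B^{-1})^{j/k}\lesssim A^{-1}$ near infinity, i.e. $\hat B^{-1}\lesssim A^{-1}$, which is exactly the local embedding $(L^B)^{k/j}\stackrel{\textup{loc}}{\hookrightarrow}L^A$ needed to invoke Theorem \ref{GBFSR}. On the other hand, raising the Orlicz embedding criterion $C^{-1}\lesssim\check D^{-1}$ to the power $1-j/k$ shows it to be \emph{equivalent} to \eqref{CFO}; hence $L^C\hookrightarrow L^{\check D}=\bigl((L^B)^{k/j}\bigr)^{L^A}{}^{\,1-j/k}$, so that $\|u^{**}\|_{((L^B)^{k/j})^{L^A})^{1-j/k}}\lesssim\|u^{**}\|_{L^C}$. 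Substituting this bound into the conclusion of Theorem \ref{GBFSR} yields the first displayed estimate.

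For the second part I would invoke Boyd's theorem: the averaging operator $f^{*}\mapsto f^{**}$ is bounded on a rearrangement invariant space precisely when its upper Boyd index is strictly less than $1$. Thus the hypothesis on the upper Boyd indices of $L^B$ and $L^C$ gives $\|(\nabla^k u)^{**}\|_{L^B}\lesssim\|\nabla^k u\|_{L^B}$ and $\|u^{**}\|_{L^C}\lesssim\|u\|_{L^C}$, and \eqref{GNOS} follows immediately from the first part. For the third part I would use Theorem \ref{SA} in contrapositive form: translating \eqref{GNNC} for the triple $L^A,L^B,L^C$ through $\varphi_{L^A}(t)=1/A^{-1}(1/t)$ (and likewise for $B$ and $C$) and substituting $s=1/t$ shows that \eqref{GNNC} is nothing but \eqref{CFO}. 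Consequently, if \eqref{CFO} fails then \eqref{GNNC} fails, and Theorem \ref{SA} then prohibits \eqref{GNOS}.

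The main obstacle I anticipate is the bookkeeping around the two auxiliary Orlicz spaces: making the factorisation identification $\bigl((L^B)^{k/j}\bigr)^{L^A}=L^D$ rigorous (the quotient $A^{-1}/(B^{-1})^{j/k}$ must be treated as the inverse of a Young function up to equivalence, and the $(1-j/k)$-power may only produce a quasi-Banach rearrangement invariant space since $1-j/k<1$), and checking that throughout only the behaviour near infinity of the Young functions — equivalently only the \emph{local} embeddings — is used, which is exactly what the pointwise Maz'ya estimate underlying Theorem \ref{GBFSR} permits. Once the inverse-function algebra is arranged so that both the embedding hypothesis of Theorem \ref{GBFSR} and the embedding $L^C\hookrightarrow((L^B)^{k/j})^{L^A})^{1-j/k}$ collapse onto the one inequality \eqref{CFO}, the three statements follow cleanly.
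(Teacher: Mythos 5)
Your proposal is correct and follows essentially the same route as the paper: Theorem \ref{GBFSR} (Maz'ya's pointwise estimate plus H\"older factorisation) combined with Lemma \ref{ORLHOL} for the first estimate, the boundedness of the maximal/averaging operator under the Boyd-index hypothesis to drop the maximal functions, and the scaling argument (Theorem \ref{SA}, i.e.\ \eqref{NCOS} obtained from \eqref{GNNC} via $\varphi_{L^A}(t)=1/A^{-1}(1/t)$) for the necessity of \eqref{CFO}. The one place you diverge is the explicit identification of the multiplier space $\bigl((L^B)^{k/j}\bigr)^{L^A}$ as an Orlicz space $L^D$ with $D^{-1}=A^{-1}/(B^{-1})^{j/k}$: this is more than Lemma \ref{ORLHOL} asserts (and, as you yourself note, $A^{-1}/(B^{-1})^{j/k}$ need not be equivalent to the inverse of a Young function), and it is also unnecessary --- the paper simply applies (iii)$\Rightarrow$(i) of Lemma \ref{ORLHOL} with $B(t)$ replaced by $B(t^{k/j})$ and $C(t)$ by $C(t^{k/(k-j)})$, whose condition (iii) is verbatim \eqref{CFO}, to obtain the embedding $(L^C)^{k/(k-j)}\hookrightarrow\bigl((L^B)^{k/j}\bigr)^{L^A}$ and hence, after the $(1-j/k)$-convexification, exactly the bound $\|u^{**}\|_{(((L^B)^{k/j})^{L^A})^{1-j/k}}\lesssim\|u^{**}\|_{L^C}$ that you want. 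With that shortcut your argument matches the paper's; the remaining points you raise (verifying the local embedding hypothesis of Theorem \ref{GBFSR} from \eqref{CFO}, and the fact that only the behaviour of the Young functions at infinity matters there) are handled correctly.
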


The previously known results in the scale of Lorentz spaces are typically dealing with another type of Gagliardo--Nirenberg inequality, namely, $\|f\|_X\lesssim\|Df\|^\alpha_Y\|f\|^{1-\alpha}_Z$ (see for example \cite{Mami, Kol1, Kol2, nguyendiaz, figalli}). This type of inequalities are not directly covered in our setting, up to some cases using $L^{p,\infty}$ instead of the standard Lebesgue case in one of the terms. We observe that of course, one can get a family of such type of inequalities estimating the left-hand side of \eqref{GN} using Sobolev's theorem, but the consequent comparison with the existing literature would not be of interest in this paper.

The previously known results in the scale of Orlicz spaces are much richer. The most covering results are, up to our knowledge, by Ka{\l}amajska and Pietruska--Pa{\l}uba (see \cite{KPP1, KPP4}), and our results someway overlap with theirs. However, we remark that in \cite{KPP1} they assume the \sl ``\, Condition A'' \rm on the Young function $A$ (which imposes, in particular, the boundedness of $A(t)/t^2$ for $t$ small), hence, in the case $A$ power, the exponent must be greater or equal than $2$. On the other hand, in \cite{KPP4} (see Theorems 4.3 and 4.4 therein) the very general results, when considered in the case of the Lebesgue measure, are very similar to ours (as in our case, the results, even if obtained through different arguments, use inequalities involving the maximal operator). But we replace some technical assumptions on functions $A, B, C$ by much simpler conditions, in addition, we show the optimality of our result for an important scale of Orlicz spaces, and, finally, our inequalities hold for functions which are not necessarily compactly supported (as required in \cite{KPP4, KPP1}).

\section{Auxiliary results}

\subsection{Banach function spaces}
Let us first collect some auxiliary results involving general Banach function spaces. Proofs of these statements can be found in classical literature (the reader is referred, e.g. to \cite[Chapter 2]{BS}). 

Given a real-valued, (Lebesgue) measurable function $u$ defined on $\mathbb{R}^n$, define the \textit{non-increasing rearrangement} by
\begin{equation}\label{noninc}
u^{*}(t):=\inf\{s\in(0,\infty): |\{|u|>s\}|\leq t\}\, ,\qquad t>0
\end{equation}
(here we adopt the standard convention $\inf\emptyset=+\infty$) and the \textit{maximal function of $u^{*}$} by
\begin{equation}\label{maxf}
u^{**}(t):=\frac{1}{t}\int_{0}^{t}u^*(s)\textup{d}s\, ,\qquad t>0.
\end{equation}
We use symbol $u^{**}$ also for a (finite-dimensional) vector-valued function $u$: the definitions above are meaningful also in this case, taking into account that $|\cdot|$ in the definition of  $u^{*}$ stands for the norm in the euclidean space.

In this paper, we shall pay special attention to rearrangement-invariant Banach function spaces (shortly: r.i.BFS) over $\mathbb{R}^n$ endowed with the Lebesgue measure, which will be denoted by $X=X(\mathbb{R}^n)$. By the Luxemburg representation theorem (see \cite[Ch.2, Theorem 4.10]{BS}), the norm in such a space can be represented by the norm in a r.i.BFS $\overline{X}$ over $\mathbb{R}^+$ endowed with the Lebesgue measure, in a sense that 
$$
\|u\|_{X(\mathbb{R}^n)}=\|u^{*}\|_{\overline{X}(\mathbb{R}^+)}\, ;
$$
In the sequel, with abuse of notation, we will not make a distinction between the norms in $X(\mathbb{R}^n)$ and in $\overline{X}(\mathbb{R}^+)$, so that, for instance, we will feel free to write $\|u\|_{X}=\|u^{*}\|_{X}$. We recall that since two functions having the same non-increasing rearrangement have the same norm when $X$ is a r.i.BFS it makes sense to define the fundamental function through
$$
\varphi_X(t):=\|\chi_E\|_X\, , \qquad t>0\, ,
$$
where $E\subset \mathbb{R}^n$ is an arbitrary set of measure $t$. 

Given a Banach function space $X$, the functionals
\begin{equation}\label{power}
\|u\|_{X^{\alpha}}:=\left(\||u|^{\alpha}\|_{X}\right)^{\frac{1}{\alpha}}\, ,\qquad \alpha>0\, ,
\end{equation}
define a class of spaces which are often referred as a \textit{$\alpha$-convexification of $X$} (see for instance \cite{Loz,LT}). Note that if $\alpha>1$ then $\|\cdot\|_{X^{\alpha}}$ is a Banach function norm for any Banach function space $X$.

Let $X,Y$ be Banach function spaces. We say that $Y$ is \textit{locally embedded into} $X$, writing
$$
Y\stackrel{\textup{loc}}{\hookrightarrow}X
$$
if for every measurable set $E\subset \mathbb{R}^n$ with $|E|<\infty$ and every measurable function $u$, we have
\begin{equation}\label{locemb}
\|u\chi_E\|_X\leq C_E\|u\chi_E\|_Y,
\end{equation}
where the constant $C_E$ depends only on the set $E$. After this definition we can assert, for instance, that for any Banach function space $X(\mathbb{R}^n)$ it holds 
$$
L^\infty(\mathbb{R}^n)\stackrel{\textup{loc}}{\hookrightarrow}X(\mathbb{R}^n)\stackrel{\textup{loc}}{\hookrightarrow}L^1(\mathbb{R}^n)\, .
$$
In the following we shall use the standard definitions of the Lorentz spaces and the Orlicz spaces. For $1\leq p,q\leq \infty$ we define
$$
\|u\|_{p,q}:=\|t^{\frac{1}{p}-\frac{1}{q}}u^*(t)\|_q\, ;
$$
for $p=\infty$ or $q=\infty$ we consider $1/\infty=0$ in the exponent term. 

Function $A:[0,\infty)\to[0,\infty)$ is said to be a Young function if it is increasing, convex and satisfies 
$$\lim_{t\to\infty}\frac{A(t)}{t}=\infty,\, \lim_{t\to 0_+}\frac{A(t)}{t}=0.$$
Note that this definition entrains that any Young function $A$ is strictly positive on $(0,\infty)$ and $A(0)=0$ holds.
For Young function $A$, the modular $\rho_{A}$ is defined for measurable functions $f$ by
$$
\rho_{A}(f):=\int_{\mathbb{R}^n}A(|f(x)|)\textup{d}x
$$
and the corresponding Luxemburg norm is defined by
$$
\|u\|_{L^A}:=\inf\left\{\lambda>0:\rho_A\left(\frac{u}{\lambda}\right)\leq 1\right\}.
$$
Note that in case of Lorentz spaces the fundamental function is independent of the second exponent and
$$
\varphi_{L^{p,q}}(t)=t^{1/p}\, , \qquad t>0\, ,
$$
while for Orlicz spaces (see e.g. \cite[Ch.4, Lemma 8.17]{BS})
\begin{equation}\label{CHARO}
\varphi_{L^A}(t)=\frac{1}{A^{-1}(1/t)}\, , \qquad t>0\, .
\end{equation}

Making some elementary calculations one can easily obtain the convexification of these spaces.
\begin{Example}\label{ExampleFactorization} 
\begin{enumerate}[\upshape(i)]
\item[]
\item If $X=L^p$, then $X^{\alpha}=L^{\alpha p}.$
\item
If $A$ is a Young function and $X=L^{A}$ is the corresponding Orlicz space, then $X^{\alpha}=L^{B}$ where $B(t)=A(t^\alpha).$
\item
If $X=L^{p,q}$ is a Lorentz space, then $X^{\alpha}=L^{\alpha p,\alpha q}.$
\end{enumerate}
\end{Example}

Let us recall other useful well-known results. The first one is the \textit{Hardy-Littlewood-Polya principle} for r.i.BFS $X$ (see \cite[Ch.2, Corollary 4.7]{BS}): if there exists a constant $C>0$ such that it holds for any $t>0$
$$
\int_{0}^tu^*(s) \textup{d}s\leq C\int_{0}^{t}v^*(s)\textup{d}s\, ,
$$
then we have
$$
\|u\|_X\leq C\|v\|_X.
$$
Another classical result we need to recall is the \textit{Riesz--Herz equivalence} for the (uncentered, cubic) maximal operator. Setting, for scalar or vector-valued functions $u$ on $\mathbb{R}^n$,
$$
Mu(x):=\sup_{Q\ni x}\frac{1}{|Q|}\int_{Q}|u(y)|\textup{d}y\, , \qquad x\in \mathbb{R}^n\, ,
$$
where the supremum is taken over all cubes $Q\subset\mathbb{R}^n$ containing $x$, the Riesz--Herz equivalence states that
\begin{equation}
u^{**}(t)\approx (Mu)^*(t)\, , \qquad t>0\, ,
\end{equation}
in the sense that the two functions are majorized each other up to a multiplicative constant, not depending on $u$ but just on the dimension $n$ (see \cite[Ch.3, Theorem 3.8]{BS}, \cite{AKMP}).

Finally, let us recall that by the celebrated Lorentz-Shimogaki's theorem (see, e.g. \cite[Ch.3, Theorem 5.17]{BS}), the maximal operator is bounded in a general r.i.BFS if and only if its upper Boyd index is smaller than $1$. The general formula for the computation of the upper Boyd index of an Orlicz space from the generating Young function is, e.g. in \cite[Ch.4, Theorem 8.18]{BS}. Easier formulas, which can be used in most practical cases, are in \cite{FK1, FK2}.

\subsection{H\"older factorization of Banach function spaces}
In the following, we shall use the H\" older inequality in its most general form. Given a Banach function space $X$ we are looking for pairs of Banach function spaces $Y, Z$ such that
\begin{equation}\label{GHOL}
\|fg\|_X\lesssim\|f\|_{Y}\|g\|_Z.
\end{equation}
The classical H\" older inequality for Banach function spaces gives us estimates \eqref{GHOL} when $X=L^1$ (in this case, for any Banach function space $Y$, there exist the optimal partner space $Z$ -- i.e. the largest possible -- such that \eqref{GHOL} holds and this space is $Z:=Y'$, well-known as associated space). The goal is, for a given pair of spaces $X, Y$, to find -- if it exists -- the optimal space $Z$ satisfying the H\" older inequality \eqref{GHOL}. It is a kind of generalised version of associate space, which we may refer as space of \it H\" older multipliers. \rm This question was already discussed in many papers, for instance, \cite{ON, KoLM,schep}. 
A little different approach leading to similar results was taken in \cite{AF}. The following Lemma gives a formal definition and also the characterization of the pairs of Banach function spaces for which such a space exists. 
\begin{Lemma}[H\"older factorization]\label{Hold}
Let $X,Y$ be two Banach function spaces. The following conditions are equivalent: 
\begin{enumerate}[\upshape(i)]
\item
$$
Y\stackrel{\loc}{\hookrightarrow} X.
$$
\item
Functional $\|\cdot\|_{Z}$ given by
\begin{equation}\label{FS}
\left\|f\right\|_{Z}:=\displaystyle{\sup_{\|g\|_{Y}\leq 1}}\|fg\|_{X}
\end{equation}
is a Banach function norm.
\end{enumerate}
\end{Lemma}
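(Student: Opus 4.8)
The plan is to read condition (ii) as the assertion that the functional $\|\cdot\|_Z$ defined by \eqref{FS} satisfies the five defining axioms (P1)--(P5) of a Banach function norm (see \cite{BS}), and to observe that among these axioms only the local finiteness condition (P4), that is $\|\chi_E\|_Z<\infty$ whenever $|E|<\infty$, is sensitive to the mutual relation between $X$ and $Y$. All the remaining axioms will be shown to hold for $\|\cdot\|_Z$ merely because $X$ and $Y$ are themselves Banach function spaces, so the whole equivalence reduces to identifying (P4) with the local embedding \eqref{locemb}.

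For the implication (ii)$\Rightarrow$(i) I would use nothing but axiom (P4). Given $E$ with $|E|<\infty$ and a measurable $u$ with $0<\|u\chi_E\|_Y<\infty$, put $g:=u\chi_E/\|u\chi_E\|_Y$, so that $\|g\|_Y\le 1$ and $\chi_E g=g$. Then \eqref{FS} gives $\|u\chi_E\|_X=\|u\chi_E\|_Y\,\|\chi_E g\|_X\le\|u\chi_E\|_Y\,\|\chi_E\|_Z$, and since (P4) guarantees $\|\chi_E\|_Z<\infty$, inequality \eqref{locemb} holds with $C_E:=\|\chi_E\|_Z$; the degenerate cases $\|u\chi_E\|_Y\in\{0,\infty\}$ are immediate.

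For the converse (i)$\Rightarrow$(ii), the homogeneity, triangle, and lattice axioms (P1), (P2) follow by taking the supremum over admissible $g$ of the corresponding norm inequalities in $X$; for instance $|f_1|\le|f_2|$ forces $\|f_1 g\|_X\le\|f_2 g\|_X$ for each $g$. The Fatou property (P3) follows because, for fixed $g$, monotone convergence $0\le f_n\uparrow f$ yields $\|f_n g\|_X\uparrow\|fg\|_X$ by the Fatou property of $X$, after which one interchanges the two suprema (over $n$ and over $g$), legitimate as both are suprema of nonnegative quantities. That $\|f\|_Z=0$ forces $f=0$ a.e.\ is seen by testing against $g=\chi_E/\|\chi_E\|_Y$ over sets $0<|E|<\infty$. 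The two structural axioms are where the hypothesis $Y\stackrel{\loc}{\hookrightarrow}X$ is consumed. Axiom (P4) is exactly the local embedding: for $|E|<\infty$ and $\|g\|_Y\le 1$ the lattice property of $Y$ gives $\|g\chi_E\|_Y\le\|g\|_Y\le 1$, whence \eqref{locemb} yields $\|g\chi_E\|_X\le C_E\|g\chi_E\|_Y\le C_E$ and therefore $\|\chi_E\|_Z\le C_E<\infty$. For (P5) I would combine the analogous properties of $X$ and $Y$: since $X$ is a Banach function space there is $C^X_E$ with $\int_E|f|\le C^X_E\|f\chi_E\|_X$, while testing \eqref{FS} against $g=\chi_E/\|\chi_E\|_Y$ gives $\|f\chi_E\|_X\le\|\chi_E\|_Y\|f\|_Z$; multiplying these produces (P5) with constant $C^X_E\|\chi_E\|_Y$, finite because $Y$ satisfies (P4).

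I expect the one genuinely nonformal point to be the identification of (P4) with \eqref{locemb}: every other axiom holds for $\|\cdot\|_Z$ irrespective of the relation between $X$ and $Y$, so the content of the Lemma is precisely that being a full Banach function norm collapses to the single finiteness condition $\|\chi_E\|_Z<\infty$, which is exactly $Y\stackrel{\loc}{\hookrightarrow}X$ read on characteristic functions. The only care needed is the bookkeeping of the degenerate cases ($|E|=0$, or vanishing/infinite norms) when testing against $g=\chi_E/\|\chi_E\|_Y$, which I would dispatch by hand.
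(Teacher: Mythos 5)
Your proof is correct, and it differs from the paper's in a worthwhile way in the direction (ii)$\Rightarrow$(i). The paper argues by contraposition: assuming the local embedding fails on some set $E$ of finite measure, it invokes \cite[Theorem 1.8]{BS} to manufacture a single function $g\in Y\setminus X$ with $\|g\|_Y=1$ supported in $E$, and concludes $\|\chi_E\|_Z\geq\|g\|_X=\infty$, so that axiom (P4) fails for $Z$. You instead argue directly: normalizing $u\chi_E$ in $Y$ and testing it in \eqref{FS} gives $\|u\chi_E\|_X\leq\|\chi_E\|_Z\,\|u\chi_E\|_Y$, so (P4) for $Z$ \emph{is} the local embedding with the explicit constant $C_E=\|\chi_E\|_Z$. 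This is more elementary (no appeal to the Riesz--Fischer/completeness machinery behind \cite[Theorem 1.8]{BS}) and it also isolates the real content of the lemma, namely that only (P4) is sensitive to the relation between $X$ and $Y$. For the direction (i)$\Rightarrow$(ii) the paper simply asserts that the verification of the axioms is routine; your write-up supplies exactly the details one would want — in particular the interchange of suprema for the Fatou property (P3), the derivation of (P4) from \eqref{locemb} via the lattice property of $Y$, and the chaining of (P5) for $X$ with the test function $g=\chi_E/\|\chi_E\|_Y$ to get (P5) for $Z$ — and these are all sound, provided (as you note) one dispatches the degenerate cases $|E|=0$ and $\|u\chi_E\|_Y\in\{0,\infty\}$ by hand.
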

\begin{proof}
At first we observe that (i) implies (ii): the reader can easily verify that the functional $\|\cdot\|_{Z}$ satisfies all the properties of the definition of the Banach function norm.\\

On the other hand, let (i) be violated. In that case, there exists a set $E$ of finite measure, such that for every $n\in\mathbb{N}$ there exists $f_{n}$ supported in $E$ satisfying
$$
\left\|f_{n}\right\|_{X}\geq n\left\|f_{n}\right\|_{Y}.
$$
By \cite[Theorem 1.8.]{BS}, there exists $g\in Y\setminus X$ with $\|g\|_{Y}=1$ supported in $E$ (note that the support in $E$ follows looking carefully at the proof of \cite[Theorem 1.8.]{BS}). We have
$$
\left\|\chi_{E}\right\|_{Z}\geq \|g\chi_{E}\|_{X}=\|g\|_{X}=\infty,
$$
therefore $\chi_{E}\notin Z$ and $Z$ is not a Banach function space.
\end{proof}
We shall denote the Banach function space $Z$ defined in \eqref{FS} by $Y^X$. 
By the definition of $Y^X$ we see that for a pair $X, Y$ of Banach function spaces such that
$$
Y\stackrel{\loc}{\hookrightarrow}X,
$$
we have a generalised version of H\" older inequality
\begin{equation}
\|fg\|_{X}\leq\|f\|_{Y^X}\|g\|_{Y}.
\end{equation}
Moreover, for fixed spaces $X, Y$ space $Y^X$ is the maximal (in the sense of embedding) Banach function space such that H\" older inequality  \eqref{GHOL} holds. For a detailed study of the space $Y^X$, a good reference is the paper \cite{MP} by Maligranda and Persson (see also, e.g. Pustylnik \cite{pus}).

To use the H\" older inequality, let us compute how does the space of these H\" older multipliers look like in the case of the Lorentz spaces and the Orlicz spaces. In the case of Orlicz spaces, the result is already known (see, e.g. \cite[Theorem 10.4 p.75]{maligrbook}), however, here we present it in the form that can be easily used later. 

\begin{Lemma}[H\" older inequality for Orlicz spaces]\label{ORLHOL}
Let $A,B$ be Young functions such that there exist $\delta>0$ and $K>0$ satisfying 
$$
A(t)\leq B(Kt)\, , \qquad  t>\delta\, ,
$$
and let $C$ be a Young function. 

Then the following conditions are equivalent.
\begin{enumerate}[\upshape(i)]
\item $$
L^C\hookrightarrow (L^B)^{L^A}.
$$
\item
There exists $K>1$ such that 
$$
A\left(\frac{st}{K}\right)\leq B(s)+C(t)\, , \qquad  s,t>0\, .
$$
\item
$$
C^{-1}(t)B^{-1}(t)\lesssim A^{-1}(t)\, , \qquad  t>0\, .
$$
\end{enumerate}

\end{Lemma}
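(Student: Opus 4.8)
The plan is to prove the three conditions equivalent by running the cycle (i) $\Rightarrow$ (iii) $\Rightarrow$ (ii) $\Rightarrow$ (i). First, let me locate the role of the standing hypothesis $A(t)\le B(Kt)$ for $t>\delta$: this is precisely the statement that $L^B\stackrel{\loc}{\hookrightarrow} L^A$ (for Orlicz spaces on finite-measure sets, domination of norms is governed by the behaviour of the Young functions near infinity), so by the Hölder factorization Lemma~\ref{Hold} it guarantees that $(L^B)^{L^A}$ is a genuine Banach function space and hence that condition (i) is meaningful. The equivalences themselves will then rest on the interplay between the pointwise inequality (ii) and the inverse-function inequality (iii).

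For (i) $\Rightarrow$ (iii) I would simply unwind the definition of the embedding. By \eqref{FS}, condition (i) is the generalized Hölder inequality $\|fg\|_{L^A}\le c\|f\|_{L^C}\|g\|_{L^B}$ holding for all $f,g$ with a fixed constant $c$. Testing this with $f=g=\chi_E$ for an arbitrary set $E$ of measure $m$, and using $\chi_E^2=\chi_E$, gives $\varphi_{L^A}(m)\le c\,\varphi_{L^C}(m)\varphi_{L^B}(m)$. Inserting the explicit formula \eqref{CHARO} for the fundamental function of an Orlicz space and substituting $t=1/m$ yields $C^{-1}(t)B^{-1}(t)\lesssim A^{-1}(t)$ for all $t>0$, which is (iii).

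For (iii) $\Rightarrow$ (ii), the technical heart, the idea is an O'Neil-type comparison. Writing (iii) as $B^{-1}(u)C^{-1}(u)\le c\,A^{-1}(u)$, I fix $s,t>0$ and split according to whether $B(s)\le C(t)$ or $C(t)\le B(s)$. In the first case set $u=C(t)$: since $B^{-1}$ is increasing, $s=B^{-1}(B(s))\le B^{-1}(u)$, hence $st\le B^{-1}(u)C^{-1}(u)\le c\,A^{-1}(u)$, and applying the increasing function $A$ together with $A(A^{-1}(u))\le u$ gives $A(st/c)\le u=C(t)\le B(s)+C(t)$; the second case is symmetric with $u=B(s)$. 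This establishes (ii) with $K=\max(c,1)$, since enlarging $K$ only decreases the left-hand side.

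Finally, for (ii) $\Rightarrow$ (i) I would argue at the level of modulars. Normalizing $\|f\|_{L^C}=\|g\|_{L^B}=1$, so that $\rho_C(f),\rho_B(g)\le 1$, the pointwise bound (ii) with $s=|g(x)|$ and $t=|f(x)|$ integrates to $\rho_A(fg/K)\le\rho_B(g)+\rho_C(f)\le 2$; convexity of $A$ with $A(0)=0$ then gives $\rho_A(fg/2K)\le 1$, i.e. $\|fg\|_{L^A}\le 2K$. By homogeneity this is the Hölder inequality $\|fg\|_{L^A}\le 2K\|f\|_{L^C}\|g\|_{L^B}$, which via \eqref{FS} says exactly $L^C\hookrightarrow (L^B)^{L^A}$, closing the cycle. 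The only delicate points I anticipate are the faithful tracking of constants (converting the $\lesssim$ in (iii) into an explicit $K$ in (ii)) and the elementary facts $B^{-1}(B(s))=s$ and $A(A^{-1}(u))\le u$, both routine because the Young functions here are finite-valued, continuous and strictly increasing on $(0,\infty)$.
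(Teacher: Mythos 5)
Your proposal is correct and follows essentially the same route as the paper: the same cycle (i)\,$\Rightarrow$\,(iii)\,$\Rightarrow$\,(ii)\,$\Rightarrow$\,(i), testing with characteristic functions for the first implication, the monotonicity/max argument for the second, and the modular-plus-convexity argument (the paper phrases it via the auxiliary Young function $\hat A(t)=A(t/K)$) for the third. Your observation that the standing hypothesis $A(t)\le B(Kt)$ for $t>\delta$ is what makes $(L^B)^{L^A}$ a Banach function space via Lemma~\ref{Hold} is a useful clarification that the paper leaves implicit.
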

\begin{Remark}
A variant of this Lemma can be found in a paper by O'Neil (see \cite[Theorem 6.5]{ON}), where the underlying set of the Orlicz spaces is bounded and the conditions on $A,B,C$ are
\begin{enumerate}
\item $$\limsup_{t\to\infty}C^{-1}(t)B^{-1}(t)/A^{-1}(t)<\infty.$$
\item There exists $K>0$ such that for all $s,t>0$ it holds$$A\left(\frac{st}{K}\right)\leq B(s)+C(t).$$
\item There exists $K'>0$ such that for $g,f$ measurable on $(0,1)$ it holds $$\|fg\|_{L^A}\leq K'\|f\|_{L^B}\|g\|_{L^C}.$$
\item For every $f\in L^B(0,1)$, $g\in L^C(0,1)$, $fg$ belongs to $L^A(0,1)$.
\end{enumerate}

Note that the condition (1) is equivalent to (iii) if one takes into account the boundedness of the underlying measure space. Condition (2) corresponds to (ii), while conditions (3) and (4) correspond to (i).

A very similar lemma can also be found in \cite[Theorem A.1, Lemma A.2]{Hogan}, where the authors use very minimal assumptions to get the equivalence of inequalities. The question of optimal H\"older inequality was initially studied by And\^o \cite{An}, he proved the equivalence of (i) and (ii): in his case (i) is just with equality, and in (ii) a supremum appears, because only the optimality is considered.

Despite the possibility of recalling modified versions of the proofs of the Theorems mentioned above, we present the complete proof for the convenience of the reader in Subsection \ref{sectionKnownResults}.
\end{Remark}



\begin{Lemma}[Saturated H\" older inequality for Lorentz spaces]\label{LemmaLorentz}
Let $P,p,Q,q,R,r$ be numbers for which $1\leq P,Q,R<\infty$ and $1\leq p,q,r$ and
$$
\frac{1}{P}=\frac{1}{R}+\frac{1}{Q}\quad \textup{and}\quad\frac{1}{p}=\frac{1}{r}+\frac{1}{q}. 
$$
Then
$$
L^{Q,q}=(L^{R,r})^{L^{P,p}}
$$
holds.
\end{Lemma}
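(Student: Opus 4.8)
The plan is to read the claimed equality $L^{Q,q}=(L^{R,r})^{L^{P,p}}$ as a two--sided continuous embedding with equivalent norms, and to split it into the H\"older inequality $L^{Q,q}\hookrightarrow (L^{R,r})^{L^{P,p}}$ and its saturation $(L^{R,r})^{L^{P,p}}\hookrightarrow L^{Q,q}$. Throughout I would pass to the Luxemburg representation and argue on $\mathbb{R}^+$ with non-increasing rearrangements. The one structural observation I would record first is that the second index relation $\tfrac1p=\tfrac1r+\tfrac1q$ says exactly that $q/p$ and $r/p$ are conjugate exponents, and they are both $\geq 1$ because that relation forces $p\leq q$ and $p\leq r$.

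For the forward (H\"older) direction I would combine the pointwise rearrangement bound $(fg)^*(2t)\leq f^*(t)g^*(t)$ with the elementary but \emph{exact} weight factorization
\[
t^{\frac pP-1}\bigl(f^*(t)g^*(t)\bigr)^{p}=\Bigl(t^{\frac qQ-1}f^*(t)^{q}\Bigr)^{p/q}\Bigl(t^{\frac rR-1}g^*(t)^{r}\Bigr)^{p/r},
\]
whose powers of $t$ sum correctly precisely because $\tfrac1P=\tfrac1R+\tfrac1Q$. Integrating in $t$ and applying H\"older's inequality with the conjugate exponents $q/p$ and $r/p$ gives $\|fg\|_{P,p}\lesssim\|f\|_{Q,q}\|g\|_{R,r}$, hence $\|f\|_{(L^{R,r})^{L^{P,p}}}\lesssim\|f\|_{Q,q}$ by the very definition \eqref{FS} of $Y^X$. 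The degenerate cases in which one of $p,q,r$ equals $\infty$ are handled by replacing the corresponding integral by an essential supremum.

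The reverse (saturation) direction is the heart of the matter. Reducing to $f=f^*$ and testing against a \emph{non-increasing} $g=h$ makes $fg=f^*h$ already non-increasing, so the rearrangement of the product is exact and the computation above becomes the identity
\[
\|f^{*}h\|_{P,p}^{p}=\int_{0}^{\infty}\Bigl(t^{\frac qQ-1}f^{*}(t)^{q}\Bigr)^{p/q}\Bigl(t^{\frac rR-1}h(t)^{r}\Bigr)^{p/r}\,dt,
\]
in which the H\"older step is an equality exactly when $h^{r}\propto t^{\,q/Q-r/R}f^{*q}$. The main obstacle is that this formal extremizer need not be non-increasing (e.g.\ when $q/Q>r/R$ and $f^{*}$ is flat), so it is not an admissible rearrangement, and no comonotone or pure-power test function realizes the bound for general indices. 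I would overcome this by replacing the extremizer with its level function in the sense of Halperin--Sinnamon, which is non-increasing and alters the relevant norms only by a fixed factor; thus restricting the optimization over $g$ to monotone $h$ costs at most a constant and still recovers $\|f\|_{Q,q}$, giving $\|f\|_{Q,q}\lesssim\|f\|_{(L^{R,r})^{L^{P,p}}}$.

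A cleaner way to package the same saturation, which I would at least mention, is to dualize via the standard identity $Y^X=(Y\odot X')'$, where $Y\odot X'$ is the pointwise product space with norm $\inf\{\|g\|_{Y}\|h\|_{X'}:|f|\leq|gh|\}$ and the identity is valid since Lorentz spaces have the Fatou property. The index relations force $Q,R>1$, so $(L^{P,p})'=L^{P',p'}$ is standard, and O'Neil's product theorem $L^{R,r}\odot L^{P',p'}=L^{Q',q'}$ (checking $\tfrac1R+\tfrac1{P'}=\tfrac1{Q'}$ and $\tfrac1r+\tfrac1{p'}=\tfrac1{q'}$) then yields $(L^{R,r})^{L^{P,p}}=(L^{Q',q'})'=L^{Q,q}$. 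In this formulation the entire difficulty is concentrated in the \emph{saturation} of O'Neil's product theorem --- that every element of $L^{Q',q'}$ genuinely factors as such a product --- which is once more the non-monotone extremizer construction described above.
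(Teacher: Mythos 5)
Your forward (H\"older) direction is essentially the paper's computation: rearrange, factor the power weight using $\tfrac1P=\tfrac1R+\tfrac1Q$, and apply H\"older with the conjugate pair $q/p$, $r/p$. The only cosmetic difference is that you use the pointwise bound $(fg)^*(2t)\le f^*(t)g^*(t)$, whereas the paper integrates $(fg)^*\le f^*g^*$ directly (strictly speaking one should pass through $\int_0^t(fg)^*\le\int_0^t f^*g^*$ and Hardy--Littlewood--P\'olya, or use your doubling trick; either repair is fine).

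The saturation half is where you genuinely diverge, and your diagnosis is the valuable part. The paper reduces to $f=f^*$ and then, invoking resonance, prescribes the partner test function to be equimeasurable with the explicit profile that makes the H\"older step an equality. You correctly observe that this extremal profile, $h^r\propto t^{\,q/Q-r/R}f^{*q}$, need not be non-increasing, so it cannot simply be declared to be a decreasing rearrangement and the identity $(f^*h)^*=f^*h$ is not available; this objection applies verbatim to the paper's own choice $g^*(s)=f^*(s)^{r/p-1}s^{r/R-1}$ (whose power of $s$ is positive whenever $r>R$), so you have exposed a step the paper glosses over rather than introduced a defect of your own. That said, both of your proposed repairs are left at the level of citation: the assertion that replacing the extremizer by its Halperin--Sinnamon level function ``alters the relevant norms only by a fixed factor'' is precisely the nontrivial content of the saturation (it is the statement that the supremum over the decreasing cone is comparable to the unrestricted dual norm for these power weights), and in the dual route the same difficulty is merely relocated into the saturation of O'Neil's product theorem $L^{R,r}\odot L^{P',p'}=L^{Q',q'}$, which you invoke but do not prove. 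So your write-up is more scrupulous than the paper's but does not yet close the gap it identifies; to be self-contained you would need either to carry out the level-function estimate for the specific weights $t^{r/R-1}$, $t^{q/Q-1}$, or to cite a precise Lorentz-space multiplier/factorization theorem (e.g.\ from Kolwicz--Le\'snik--Maligranda) covering the full stated range of second indices.
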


\begin{Remark}
The version of H\" older inequality, implicitly included in Lemma \ref{LemmaLorentz}, is well known in the same assumptions, see, e.g. \cite[Theorem 4.5 p.271]{hunt}.
\end{Remark}

\subsection{Proof of the main results}
\begin{proof}[Proof of Theorem \ref{SA}]
In order to show inequality \eqref{GNNC}, we test inequality \eqref{GN} in a suitable way. Consider the function $u$ given by 
$$
u(x)=\left\{
\begin{array}{ll}
2-|x|^k & \textup{for }|x|<1\\
(2-|x|)^{k} & \textup{for }1\leq|x|\leq 2\\
0 & \textup{elsewhere}
\end{array}
\right.\, , \qquad  x\in \mathbb{R}^n \, .
$$ 
Define a dilation operator by
$$
T_{s}u(x):=u(sx)\, , \qquad  x\in \mathbb{R}^n \, , \quad s>0\, ,
$$
and set
$$
v:=T_s u.
$$
If \eqref{GN} holds then there is a constant independent of $s$ for which 
\begin{equation}\label{testedinequality}
\|\nabla^{j}v\|_{X}\leq C\|\nabla^{k}v\|_{Y}^{j/k}\|v\|_{Z}^{1-j/k}
\end{equation}
holds. We have
$$
\nabla^{j}v=s^{j}T_{s}(\nabla^{j} u), \quad \nabla^{k}v=s^{k}T_{s}(\nabla^{k} u).
$$
Moreover, elementary calculations show
$$
|T_s(\nabla^{k}u)|=k!\chi_{B(0,2/s)}
$$
and
$$
|T_{s}(\nabla^j u)|=\frac{k!}{(k-j)!}\left(|sx|^{k-j}\chi_{B(0,1/s)}(x)+|2-|sx||^{k-j}\chi_{B(0,2/s)\setminus B(0,1/s)}(x)\right).
$$
Therefore we conclude
$$
\begin{aligned}
(T_s u)^{**}&\approx(\chi_{(0,2/s)})^{**}\\
\left(\nabla^j (T_s u)\right)^{**}&\approx s^{j}(\chi_{(0,2/s)})^{**}\\
\left(\nabla^k (T_s u)\right)^{**}&\approx s^{k}(\chi_{(0,2/s)})^{**},
\end{aligned}
$$
where the constants of $\approx$ are independent of $s$. The Hardy--Littlewood--Polya principle yields
$$
\begin{aligned}
\|T_s u\|_Z&\approx \varphi_{Z}(2/s)\\ 
\|\nabla^j T_s u\|_X&\approx s^j\varphi_{X}(2/s)\\
\|\nabla^k T_s u\|_Y&\approx s^k\varphi_{Y}(2/s).
\end{aligned}
$$
Now from \eqref{testedinequality} it follows that
$$
s^j\varphi_{X}(2/s)\lesssim \left(s^k\varphi_{Y}(2/s)\right)^{j/k}\left(\varphi_{Z}(2/s)\right)^{1-j/k} \, , \quad s>0
$$
holds with constant independent of $s$. The inequality \eqref{GNNC} follows.
\end{proof}
In the case of Lebesgue spaces the scaling argument gives us the precise relationship between the exponents introduced by Gagliardo and Nirenberg. In the case of Orlicz spaces ($X=L^A, Y=L^B, Z=L^C$) we can use \eqref{CHARO} to verify that \eqref{GNNC} gives the following necessary condition, which can be expressed in the terms of an inequality involving the corresponding Young functions
\begin{equation}\label{NCOS}
B^{-1}(t)^{j/k}C^{-1}(t)^{1-j/k}\lesssim A^{-1}(t) \, , \quad t>0\, .
\end{equation}
Moreover, the analogous argument in the case of Lorentz spaces $L^{P,p},L^{R,r},L^{Q,q}$ gives us the following relationship \eqref{NCLS} between $P,R,Q$. 
\begin{Remark}
Using the scaling argument we see that for Lorentz spaces, the equality
\begin{equation}\label{NCLS}
\frac{1}{P}=\frac{j/k}{R}+\frac{1-j/k}{Q}.
\end{equation}
is necessary to hold. However there could be still a space for improvement in exponents $p,q,r$ since these do not have any effect on the fundamental function (the reader may found such comment also at the end of the Lecture 23 in the Tartar's book \cite{tartar}). We leave open the question of possible improvements.
\end{Remark}

\begin{Theorem}[Maz'ya's point-wise estimate]
Let $j,k\in\mathbb{N}$, $j<k$ and let $u$ be $k$-times weakly differentiable function. Then
$$
|\nabla^j u(x)|\lesssim M(\nabla^k u)(x)^{\frac{j}{k}}Mu(x)^{1-\frac{j}{k}}\, , \quad x\in\mathbb{R}^n\, .
$$
\end{Theorem}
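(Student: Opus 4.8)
The plan is to derive the multiplicative (geometric-mean) estimate from an additive two-scale estimate, and to obtain the latter from a localized Sobolev integral representation of the intermediate derivative.

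\emph{Reduction to an additive estimate.} I would first prove that for almost every $x\in\mathbb{R}^n$ and every $r>0$ one has
$$
|\nabla^j u(x)|\lesssim r^{-j}\,Mu(x)+r^{k-j}\,M(\nabla^k u)(x),
$$
with a constant depending only on $n,j,k$. Granting this, the assertion follows by optimizing in $r$: the right-hand side, as a function on $(0,\infty)$, is minimized up to a universal factor at the balancing scale $r_0$ with $r_0^{\,k}\approx Mu(x)/M(\nabla^k u)(x)$, and inserting $r_0$ makes the two summands comparable, each equal up to constants to $M(\nabla^k u)(x)^{j/k}\,Mu(x)^{1-j/k}$. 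Since the additive bound holds for \emph{every} $r$, it holds in particular for $r_0$, which yields the claim. The degenerate situations, where $Mu(x)=0$, or $M(\nabla^k u)(x)=0$, or either maximal function is infinite, are disposed of separately and trivially.

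\emph{The integral representation.} To establish the additive estimate I would fix $x$, $r>0$ and the ball $B=B(x,r)$, and assume first that $u$ is smooth. Using a fixed cut-off $\phi\in C_c^\infty(B)$ with $\int\phi=1$ and $|\partial^\gamma\phi|\lesssim r^{-n-|\gamma|}$, I would introduce the averaged Taylor polynomial $P=Qu$ of degree $k-1$ associated with $\phi$, together with the accompanying Sobolev representation of the remainder: for $|\alpha|=k$ the kernels $k_\alpha(z,y)$, supported in $B$, satisfy the Riesz-type bound $|\partial_z^\beta k_\alpha(z,y)|\lesssim|z-y|^{k-|\beta|-n}$. Differentiating and evaluating at $z=x$ gives the decomposition
$$
\nabla^j u(x)=\nabla^j P(x)+\int_{B}\mathcal{K}(x,y)\,\nabla^k u(y)\,dy,\qquad |\mathcal{K}(x,y)|\lesssim|x-y|^{k-j-n}.
$$
For the polynomial part I would use the inverse estimate $|\nabla^j P(x)|\lesssim r^{-j}\,|B|^{-1}\!\int_B|P|$, valid on the finite-dimensional space of polynomials of degree $\le k-1$ by norm equivalence and scaling, together with the Bramble--Hilbert bound $|B|^{-1}\!\int_B|u-P|\lesssim r^k\,|B|^{-1}\!\int_B|\nabla^k u|$; this produces $|\nabla^j P(x)|\lesssim r^{-j}\,Mu(x)+r^{k-j}\,M(\nabla^k u)(x)$.

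\emph{The remainder and the general case.} For the integral term the kernel bound reduces the estimate to a truncated Riesz potential, which is controlled by the maximal function: decomposing $B$ into dyadic annuli $2^{-i-1}r\le|x-y|<2^{-i}r$ and using $\int_{B(x,\rho)}|\nabla^k u|\lesssim\rho^{n}M(\nabla^k u)(x)$ gives, since $k-j>0$,
$$
\int_{B}|x-y|^{k-j-n}|\nabla^k u(y)|\,dy\lesssim\sum_{i\ge0}(2^{-i}r)^{k-j}\,M(\nabla^k u)(x)\lesssim r^{k-j}\,M(\nabla^k u)(x),
$$
the geometric series converging precisely because $k-j\ge1$. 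Adding the two contributions delivers the additive estimate for smooth $u$, and the passage to a general $k$-times weakly differentiable $u$ is routine, since the Sobolev representation is valid at almost every $x$ for $u\in W^{k,1}_{\mathrm{loc}}$ and $\nabla^j u$ is read off as the precise (Lebesgue-point) representative. The technical heart -- and the step I expect to be most delicate -- is the Sobolev representation itself: producing the remainder kernel with the sharp homogeneity $|x-y|^{k-j-n}$ after $j$ differentiations and verifying that all implied constants are scale-invariant, so that the $r$-powers track exactly as stated. Everything downstream (the truncated-potential bound and the optimization in $r$) is then elementary.
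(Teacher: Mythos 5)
Your argument is correct, but it is worth saying up front that the paper does not actually prove this theorem: its ``proof'' is a one-line citation of Maz'ya--Shaposhnikova \cite{MS1}, together with the remark that an estimate for the centered ball maximal operator implies, a fortiori, one for the uncentered cubic maximal operator. What you have written is a self-contained reconstruction of the standard argument behind that reference: the two-scale additive bound $|\nabla^j u(x)|\lesssim r^{-j}Mu(x)+r^{k-j}M(\nabla^k u)(x)$ obtained from the averaged Taylor polynomial and the Sobolev integral representation (inverse estimate plus Bramble--Hilbert for the polynomial part, a truncated Riesz potential bound via dyadic annuli for the remainder), followed by optimization in $r$ at the balancing scale $r_0^k\approx Mu(x)/M(\nabla^k u)(x)$. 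All the steps check out, including the convergence of the geometric series from $k-j\geq 1$ and the passage to $W^{k,1}_{\loc}$ via Lebesgue points; what your route buys is independence from the literature at the cost of the technical work on the representation kernels, which is exactly the part the paper chose to outsource. One small caveat: the degenerate case $M(\nabla^k u)(x)=0$ with $Mu(x)=\infty$ (e.g.\ $u$ a nonzero polynomial of degree between $j$ and $k-1$) is not quite ``trivial''---the right-hand side is then of the form $0\cdot\infty$ and the inequality only holds under the convention that it be read as $+\infty$; this is an ambiguity already present in the statement itself, but you should not claim to dispose of it without naming that convention.
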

\begin{proof}
See \cite[Theorem 1]{MS1}, which has been proved for the maximal operator over centered balls and therefore, a fortiori, holds for the uncentered cubic maximal operator.
\end{proof}
\begin{proof}[Proof of Theorem \ref{GBFSR}] Use the Maz'ya's point-wise estimate followed by the H\"older factorization and the Riesz--Herz equivalence to obtain
$$
\begin{aligned}
\|\nabla^j u\|_X&\lesssim \|(M(\nabla^k u))^{\frac{j}{k}} (Mu)^{1-\frac{j}{k}}\|_X\\
&\leq \|M(\nabla^k u)^{\frac{j}{k}}\|_{Y^{\frac{k}{j}}}
\|(Mu)^{1-\frac{j}{k}}\|_{(Y^{\frac{k}{j}})^X}\\
&=\|M(\nabla^{k}u)\|_{\raise -4pt \hbox{${}_Y$}}^{\frac{j}{k}}\|Mu\|_{(Y^{\frac{k}{j}})^X)^{1-\frac{j}{k}}}^{1-\frac{j}{k}}\\
&\approx \|(\nabla^{k}u)^{**}\|_{\raise -4pt \hbox{${}_Y$}}^{\frac{j}{k}}
\|u^{**}\|_{((Y^{\frac{k}{j}})^X)^{1-\frac{j}{k}}}^{1-\frac{j}{k}}.
\end{aligned}
$$
\end{proof}

\begin{proof}[Proof of Corollary \ref{GNLOR}]
Since
$$
\frac{k}{j}R>P,
$$
it is
$$
\left(L^{R,r}\right)^{\frac{k}{j}}\stackrel{\textup{loc}}{\hookrightarrow} L^{P,p}\, ,
$$
and therefore we are allowed to replace $X$ by $L^{P,p}$ and $Y$ by $L^{R,r}$ in Theorem \ref{GBFSR}, so that
\begin{equation}\label{almostdone}
\|\nabla^j u\|_{P,p}\lesssim
\|(\nabla^{k}u)^{**}\|_{R,r}^{\frac{j}{k}}
\|u^{**}\|_{(((L^{R,r})^{\frac{k}{j}})^{L^{P,p}})^{1-\frac{j}{k}}}^{1-\frac{j}{k}}.
\end{equation}
Applying in turn Example \ref{ExampleFactorization}(iii), Lemma \ref{LemmaLorentz}, Example \ref{ExampleFactorization}(iii) again, and, finally, \eqref{assCor13}, we have
$$
\left(\left((L^{R,r})^{\frac{k}{j}}\right)^{L^{P,p}}\right)^{1-\frac{j}{k}}
=
\left((L^{\frac{Rk}{j},\frac{rk}{j}})^{L^{P,p}}\right)^{1-\frac{j}{k}}
=
\left(L^{\frac{PR}{R-\frac{j}{k}P},\frac{pr}{r-\frac{j}{k}p}}\right)^{1-\frac{j}{k}}
=
L^{\frac{PR\left(1-\frac{j}{k}\right)}{R-\frac{j}{k}P},\frac{pr\left(1-\frac{j}{k}\right)}{r-\frac{j}{k}p}}
=L^{Q,q}\, .
$$
We now conclude dropping formally the maximal functions of the rearrangements in both terms of the right hand side of \eqref{almostdone}, because when $R,Q>1$ the functionals $\|\cdot\|_{R,r}$, $\|\cdot\|_{Q,q}$ are equivalent to Banach function norms (namely, those ones obtained replacing $u^{*}$ by $u^{**}$; for the proof see e.g. \cite[Corollary 8.2.4]{FS}), and therefore 
$$
 \|u^{**}\|_{R,r}\approx \|u\|_{R,r}\,, \qquad \|u^{**}\|_{Q,q}\approx \|u\|_{Q,q}.$$
\end{proof}

\begin{proof}[Proof of Corollary \ref{GNORL}]
Assumption \eqref{CFO} lets us to apply (iii)$\Rightarrow$(i) in Lemma \ref{ORLHOL} where $B(t)$ is replaced by $B(t^{\frac kj})$ and $C(t)$ is replaced by $C(t^{\frac k{k-j}})$, so that
$$
\left(L^C\right)^{\frac k{k-j}}\hookrightarrow \left((L^B)^{\frac kj}\right)^{L^A}
$$
and therefore, arguing as in the proof of Theorem \ref{GBFSR}, we get
$$
\begin{aligned}
\|\nabla^j u\|_{L^A}&\lesssim\|((\nabla^k u)^{**})^{\frac jk}(u^{**})^{1-\frac jk}\|_{L^A}\lesssim \|(\nabla^k u)^{**}\|_{L^B}^{\frac jk}\|u^{**}\|_{L^C}^{1-\frac jk}.
\end{aligned}
$$
Moreover, if the upper Boyd indices of $L^B$ and $L^C$ are smaller than $1$, then, as in the previous proof, we can drop formally the maximal functions of the rearrangements and we get 
the inequality \eqref{GNOS}. It is clear that in such case the assumption \eqref{CFO}, that we just showed to be sufficient for \eqref{GNOS}, is optimal because it is also necessary (see \eqref{NCOS}).
\end{proof}

We remark that the essential properties of the Orlicz spaces (with upper Boyd indices smaller than $1$) used in the proof are the easy computations of the convexification and the factorisation. In principle, the same method can be used for any other class of spaces with these same features.

\subsection{Proof of Lemmata}\label{sectionKnownResults}

\begin{proof}[Proof of Lemma \ref{ORLHOL}]

We shall prove $\textup{(i)}\Rightarrow\textup{(iii)}\Rightarrow\textup{(ii)}\Rightarrow\textup{(i)}$ 
\\

Let us start with the proof of $\textup{(i)}\Rightarrow\textup{(iii)}$: we plug
$$
g=f=\chi_{(0,t^{-1})} \, , \quad t>0\, ,
$$
into the H\" older inequality
$$
\|fg\|_{L^A}\lesssim \|f\|_{L^B}\|g\|_{L^C}\, ,
$$
and we get immediately
$$
B^{-1}(t)C^{-1}(t)\lesssim A^{-1}(t) \, , \quad t>0\, .
$$

Now let us prove $\textup{(iii)}\Rightarrow \textup{(ii)}$.
Let $K>1$ be such that 
$$
B^{-1}(t)C^{-1}(t)\le K A^{-1}(t) \, , \quad t>0\, .
$$
We use just the monotonicity to obtain
$$
\begin{aligned}
st&=B^{-1}(B(s))C^{-1}(C(t))\\
&\leq (B^{-1}C^{-1})(\operatorname{max}\{B(s),C(t)\})\\
&\leq (B^{-1}C^{-1})(B(s)+C(t))\\
&\leq KA^{-1}(B(s)+C(t))  \, , \quad s,t>0\, .
\end{aligned}
$$ 
Dividing by $K$ and applying $A$ to both sides we get 
$\textup{(ii)}$.

Now let us finish by proving $\textup{(ii)}\Rightarrow\textup{(i)}$. Let $f\in L^B$ and $g\in L^C$. Set $\hat{A}(t):=A\left(\frac{t}{K}\right)$. Since
$$
\rho_{\hat{A}}\left(\frac{fg}{2\|f\|_{L^B}\|g\|_{L^C}}\right)\leq\frac12
\rho_{\hat{A}}\left(\frac{fg}{\|f\|_{L^B}\|g\|_{L^C}}\right)\leq\frac12\left(\rho_B\left(\frac{f}{\|f\|_B}\right)+\rho_C\left(\frac{g}{\|g\|_C}\right)\right)\leq 1\, ,
$$
we deduce
$$
\|fg\|_{L^A}= K\|fg\|_{L^{\hat{A}}}\leq 2K\|f\|_{L^B}\|g\|_{L^C}
$$
hence we get
$$
L^C\hookrightarrow\left(L^A\right)^{L^B}.
$$
\end{proof}





\begin{proof}[Proof of Lemma \ref{LemmaLorentz}]
Consider the case $p<\infty$. First note that
\begin{equation}\label{LOHOL}
\begin{aligned}
\|fg\|_{P,p}^{p}&=\int_{0}^{\infty}(fg)^{*}(s)^{p}s^{\frac{p}{P}-1}\textup{d}s\\
&\leq\int_{0}^{\infty}f^*(s)^p g^*(s)^p s^{\frac{p}{P}-1}\textup{d}s\\
&=\int_{0}^{\infty}f^*(s)^p s^{\frac{p}{R}-\frac{p}{r}} g^*(s)^p s^{\frac{p}{Q}-\frac{p}{q}}\textup{d}s\\
&\leq \|f^*(s)^p  s^{\frac{p}{R}-\frac{p}{r}}\|_{\frac{r}{p}}\|g^*(s)^p s^{\frac{p}{Q}-\frac{p}{q}}\|_{\frac{q}{p}}\\
&=\|f\|_{R,r}^p\|g\|_{Q,q}^p.
\end{aligned}
\end{equation}
On the other hand, if $p=\infty$, we must have $r=\infty$ and $q=\infty$. As before we get
$$\begin{aligned}
\|fg\|_{P,\infty}&=\esssup\limits_{s>0}\{s^{1/P}(fg(s))^*\}\\
&\leq\esssup\limits_{s>0}\{s^{1/R+1/Q}f^*(s)g^*(s)\}\\
&\leq\esssup\limits_{s>0}\{s^{1/R}f^*(s)\}\esssup\limits_{s>0}\{s^{1/Q}g^*(s)\}\\
&=\|f\|_{R,\infty}\|g\|_{Q,\infty}\, .
\end{aligned}
$$

This proves $L^{Q,q}\hookrightarrow (L^{R,r})^{L^{P,p}}$. To prove the opposite embedding, we have to prove that for each function $g$ there exists a function $f$ such that all the inequalities in \eqref{LOHOL} are sharp (up to constant). Let $f$ be an arbitrary measurable function. Since the space $\mathbb{R}^n$ equipped by Lebesgue measure is resonant (see \cite[Theorem 2.7]{BS}) one can choose $g$ equi-measurable with $g^*(s)=f^{*}(s)^{r/p-1}s^{r/R-1}$ such that the first inequality in \eqref{LOHOL} is saturated up to arbitrary small $\varepsilon$. Since $g$ is chosen in this special way, H\" older inequality in \eqref{LOHOL} is saturated. And since $\varepsilon$ can be chosen arbitrary small the saturation is complete.
\end{proof}

\noindent
\it Acknowledgments. \rm The second author has been partially supported by the Gruppo Nazionale per l'Analisi Matematica, la Probabilit\`a e le loro Applicazioni (GNAMPA) of the Istituto Nazionale di Alta Matematica (INdAM) and by Universit\`a di Napoli Parthenope through the project ``Sostegno alla Ricerca individuale'',
the third author was supported by GA\v{C}R 18-00960Y, and the fourth author was supported by EF-IGS2017-Soudsk\' y-IGS07P1.

The authors wish to thank Raffaella Servadei, Luigi D'Onofrio and Associazione di Fondazioni e di Casse di Risparmio Spa for the Young Investigator Training Program 2017 that supported the third author during the research.

A special thank is for the friend and colleague Stanislav Hencl, who suggested to study the Gagliardo--Nirenberg inequality in the Lorentz scale: the whole research (begun with the paper \cite{FFRS}) came following this 
original goal.

\bibliographystyle{plain}

\end{document}